\documentclass{article}
\usepackage{amsmath,amssymb,amsthm}
\usepackage{amscd}
\usepackage{verbatim}
\usepackage[english]{babel}
\usepackage[linktocpage]{hyperref}
\usepackage{titlesec}
\usepackage[rightcaption]{sidecap}

\usepackage{wrapfig}
\usepackage{blindtext}
\newtheorem{theorem}{Theorem}
\newtheorem{lemma}{Lemma}
\newtheorem{remark}[theorem]{Remark}
\newtheorem{proposition}[theorem]{Proposition}
\newtheorem{definition}[theorem]{Definition}

\newtheorem{corollary}[theorem]{Corollary}
\newtheorem{problem}[theorem]{Problem}

\title{Cyclic adjoint modules and their embeddings in quantized enveloping algebras}
\author{Arnab Bhattacharjee \thanks{Mathematical Institute of Charles University, email: arnabbhatta7@gmail.com\\ \\2020 \emph{Mathematics Subject Classification.} Primary: 17B37; Secondary: 17B10, 20G42\\ \emph{Key words and phrases}.
Quantized enveloping algebra, cyclic adjoint module, quantum Levi subalgebra,
adjoint action.\\ The author acknowledges support from HORIZON-MSCA-2021-SE-01-CaLIGOLA and also from COST Action CaLISTA CA21109.}}
\date{}
\begin{document}
\maketitle

\begin{abstract}
We study cyclic adjoint modules arising from the relative locally finite part of the adjoint action of a quantum Levi subalgebra on a quantized enveloping algebra. We classify embeddings of finite-dimensional irreducible modules inside of quantized enveloping algebra via cyclic generators and show that such realizations are in general non-unique, exhibiting infinite families in the cominuscule case. We also introduce a partial order on cyclic adjoint modules, characterize its minimal elements, and prove finite generation by irreducible submodules.
\end{abstract}

\tableofcontents

\section{Introduction}

The adjoint action of a quantized enveloping algebra \cite{drinfeld1986quantum, jimbo1985q, lusztig2010introduction} on itself plays a central role in understanding its internal structure and its relation to representation theory. A fundamental result of Joseph and Letzter \cite{joseph1992local, letzter2002coideal} shows that the locally finite part of this action admits a rich algebraic description. In many situations, it is natural to study such structures relative to a quantum Levi subalgebra $U_q(\mathfrak l_S) \subset U_q(\mathfrak g)$ (see \cite{joseph2012quantum}). This leads to the relative locally finite part
\[
F_S := \{ v \in U_q(\mathfrak g) \mid \dim \operatorname{ad}_L(U_q(\mathfrak l_S)) \cdot v < \infty \},
\]
which provides a natural setting for realizing finite-dimensional $U_q(\mathfrak l_S)$-modules inside $U_q(\mathfrak g)$.

The main problem addressed in this paper is the following:
\medskip

\noindent
\begin{problem}
     How do finite-dimensional irreducible $U_q(\mathfrak l_S)$-modules arise inside $U_q(\mathfrak g)$ via the adjoint action, and to what extent are such realizations unique?
\end{problem}

To study this question, we associate to each $v \in F_S$ the cyclic adjoint module
\[
M(v) := \operatorname{ad}_L(U_q(\mathfrak l_S)) \cdot v,
\]
and analyze embeddings of a fixed type $\lambda$, i.e.\ $U_q(\mathfrak l_S)$-submodules of $U_q(\mathfrak g)$ isomorphic to $V(\lambda)$. We show that embeddings of type $\lambda$ are parametrized by elements of $F_S(\lambda)$ modulo equality of generated submodules, leading to a natural map
\[
\Phi : F_S \to \{\text{isomorphism classes of finite-dimensional } U_q(\mathfrak l_S)\text{-modules}\},
\quad v \mapsto [M(v)].
\]
A key feature of this framework is the non-uniqueness of realizations: using results of Krähmer \cite[Proposition 1]{krahmer2004dirac} in the cominuscule case, we exhibit infinite families of distinct elements of $F_S$ generating isomorphic irreducible modules, showing that the fibers of $\Phi$ can be infinite.

In addition, we investigate structural properties of cyclic adjoint modules. We introduce a preorder on $F_S$ given by
\[
w \le v \quad \Longleftrightarrow \quad M(w) \subseteq M(v),
\]
and study the induced partially ordered set
\[
\mathcal P_S := \{ M(v) \mid v \in F_S \}.
\]
Within this framework, irreducible modules correspond to minimal elements, and every cyclic adjoint module is generated by finitely many irreducible submodules arising from highest weight vectors.

These results provide a concise framework for understanding the realization and structure of cyclic adjoint modules inside $U_q(\mathfrak g)$, highlighting both the existence and the non-uniqueness of such embeddings.

\section{Preliminaries}
Throughout the paper $\mathfrak{g}$ is a complex semisimple Lie algebra and $\mathfrak{h}$ denotes the cartan subalgebra of $\mathfrak{g}$. In this section we follow literature from text books \cite{joseph2012quantum, klimyk2012quantum}.

Let $\lbrace \alpha_1, \alpha_2, \cdots , \alpha_n \rbrace$ and $\lbrace \varpi_1, \varpi_2, \cdots , \varpi_n \rbrace$ are a set of simple roots and the corresponding set of fundamental weights of Lie algebra $\mathfrak{g}$. The integral roots and weight lattices are denoted by $Q$ and $P$ respectively and the dominant integral weight are denoted by $P^+$. The killing form induces a bilinear pairing $\langle \cdot, \cdot \rangle$ on $P\times P$, and $\langle \varpi_{i}, \alpha_j \rangle= \delta_{ij}\, d_i$.

\begin{definition}
    Let $U_{q}(\frak{g})$ be the \emph{quantized enveloping algebra} corresponding to $\mathfrak{g}$ with generators $E_{i}, F_{i}, K_{i}$ and $K^{-1}_{i}$. See (\cite{joseph2012quantum}, \S 3.2.9) for their generating relations explicitly. For the coproduct, counit and antipode, we use the conventions of \cite{joseph2012quantum}.
    \[
    \Delta(E_i)= E_i\otimes 1+ K_i \otimes E_i, \,\,\,\,\, \Delta(F_i)= F_i\otimes K^{-1}_i+ 1\otimes F_i 
    \]
\end{definition}

 \begin{definition}
    For $\lbrace \alpha_{i}\rbrace_{i\in S}$, a subset of simple roots of $\frak{g}$, we consider the Hopf subalgebra, $$U_{q}(\frak{l}_{S}):= \langle E_{i}, F_{i}, K_{j}^{\pm1}: i\in S, j=1,2,..,r\rangle$$ which we call the \emph{quantum Levi subalgebra}.
    \end{definition}

    Left adjoint action of $U_q(\mathfrak{g})$ on itself is given by 
    \[
    \operatorname{ad}_L(a) (b):= a_{(1)}\,b \, S(a_{(2)}) \,\,\,\, \text{for}\,\,\,\, a,\, b\in U_q(\mathfrak{g})
    \]
\begin{definition}
    We define \emph{relative locally finite part of $U_q(\mathfrak{g})$} under the left adjoint action of quantum Levi subalgebra, and is given by 
\[
F_S := \{ v \in U_q(\mathfrak{g}) \mid \dim \operatorname{ad}_L(U_q(\mathfrak{l}_S)) \cdot v < \infty \}.
\]

By denoting $\operatorname{ad}_L(U_q(\mathfrak{l}_S)) \cdot v$ we mean that $\operatorname{ad}_L(U_q(\mathfrak{l}_S)) \cdot v:= \lbrace \operatorname{ad}_L(a)(v): a\in U_q(\mathfrak{l}_S)\rbrace$. See (\cite{joseph2012quantum}, \S 1.3.1) and (\cite{joseph1992local}, \S 2.3) for more details. 
\end{definition}

\section{Realization and embeddings of cyclic adjoint modules}

\begin{definition}
    Let $v\in F_S$, a \emph{cyclic adjoint module} generated by $v$ under the left adjoint action of $U_q(\mathfrak{l}_S)$ is given by 
\[
M(v) := \operatorname{ad}_L(U_q(\mathfrak{l}_S)) \cdot v.
\]
\end{definition}

\begin{lemma}\label{lem:inclusion}
If $w \in M(v)$, then $M(w) \subseteq M(v)$.
\end{lemma}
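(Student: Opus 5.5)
The plan is to use only the fact that $\operatorname{ad}_L$ is an algebra homomorphism from $U_q(\mathfrak{g})$ to $\operatorname{End}(U_q(\mathfrak{g}))$, restricted here to $U_q(\mathfrak{l}_S)$. The key identity is
\[
\operatorname{ad}_L(ab)(x)=\operatorname{ad}_L(a)\bigl(\operatorname{ad}_L(b)(x)\bigr), \qquad a,b\in U_q(\mathfrak{l}_S),\ x\in U_q(\mathfrak{g}).
\]
To verify it, I will use that $\Delta$ is an algebra map, so that $(ab)_{(1)}\otimes(ab)_{(2)}=a_{(1)}b_{(1)}\otimes a_{(2)}b_{(2)}$, and that the antipode is an algebra anti-homomorphism, $S(a_{(2)}b_{(2)})=S(b_{(2)})S(a_{(2)})$. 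Substituting these into the definition gives
\[
\operatorname{ad}_L(ab)(x)=a_{(1)}b_{(1)}\,x\,S(b_{(2)})S(a_{(2)})=\operatorname{ad}_L(a)\bigl(\operatorname{ad}_L(b)(x)\bigr).
\]
Linearity of $a\mapsto \operatorname{ad}_L(a)$ is immediate from the definition.

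The argument then runs as follows. Let $w\in M(v)$. By the definition of $M(v)$, there is $b\in U_q(\mathfrak{l}_S)$ with $w=\operatorname{ad}_L(b)(v)$. Now take an arbitrary element of $M(w)$, which has the form $\operatorname{ad}_L(a)(w)$ with $a\in U_q(\mathfrak{l}_S)$. By the identity above,
\[
\operatorname{ad}_L(a)(w)=\operatorname{ad}_L(ab)(v).
\]
Since $U_q(\mathfrak{l}_S)$ is a subalgebra, $ab\in U_q(\mathfrak{l}_S)$, and so this element lies in $M(v)$. This proves $M(w)\subseteq M(v)$.

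One point deserves a remark: $M(v)$, as defined, is exactly the set $\{\operatorname{ad}_L(a)(v)\}$ rather than a span. This causes no difficulty, because the map $a\mapsto\operatorname{ad}_L(a)(v)$ is linear, so this set is already a subspace. I would also note in passing that $w\in F_S$, since $M(w)\subseteq M(v)$ is finite-dimensional, so $M(w)$ is indeed a cyclic adjoint module in the sense of the definition.

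The only step requiring any care is the homomorphism property of $\operatorname{ad}_L$. This is standard Hopf algebra bookkeeping and presents no real obstacle.
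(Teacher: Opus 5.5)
Your proposal is correct and follows essentially the same approach as the paper: write $w=\operatorname{ad}_L(b)(v)$ and use $\operatorname{ad}_L(a)(w)=\operatorname{ad}_L(ab)(v)\in M(v)$. Your verification of the homomorphism property of $\operatorname{ad}_L$ and your remark that $M(v)$ is already a subspace are details the paper leaves implicit.
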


\begin{proof}
Let $w = \operatorname{ad}_L(x)(v)$ for some $x \in U_q(\mathfrak{l}_S)$.
Then for any $y \in U_q(\mathfrak{l}_S)$,
\[
\operatorname{ad}_L(y)(w)
= \operatorname{ad}_L(yx)(v) \in M(v).
\]
Thus $M(w) \subseteq M(v)$.
\end{proof}

We consider $V(\lambda)$ as an irreducible $U_q(\mathfrak{l}_S)$ module for $\lambda\in P^+$, and define

\[
F_S(\lambda)
:=
\left\{\, v \in F_S \;\middle|\; M(v) \simeq V(\lambda) \,\right\}.
\]

\[
\Lambda_S
:=
\left\{\, \lambda \in \mathfrak{h}^* \;\middle|\; F_S(\lambda) \neq \varnothing \,\right\}.
\]

\begin{remark}
By the classification of finite-dimensional irreducible $U_q(\mathfrak l_S)$-modules (see, for example, \cite{joseph2012quantum, klimyk2012quantum}), any $\lambda \in \Lambda_S$ is a dominant integral weight. In particular, $\Lambda_S \subseteq P^+$.
\end{remark}

\begin{definition}
Let $\lambda \in P^+$. By an \emph{embedding of type $\lambda$} into $U_q(\mathfrak{g})$,
we mean a $U_q(\mathfrak{l}_S)$-submodule $M \subset U_q(\mathfrak{g})$
such that $M \simeq V(\lambda)$ as $U_q(\mathfrak{l}_S)$-modules.
\end{definition}

\begin{proposition}
Let $\lambda \in P^+$. For each $v \in F_S(\lambda)$, the subspace
\[
M(v) = \operatorname{ad}_L(U_q(\mathfrak{l}_S)) \cdot v
\]
defines an embedding of type $\lambda$ into $U_q(\mathfrak{g})$.
\end{proposition}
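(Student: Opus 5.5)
The statement is essentially a matter of unwinding definitions, so the plan is to check the two conditions in the definition of an embedding of type $\lambda$: that $M(v)$ is a $U_q(\mathfrak l_S)$-submodule of $U_q(\mathfrak g)$, and that it is isomorphic to $V(\lambda)$. The second condition is exactly the defining condition of $F_S(\lambda)$, so the only real content is the submodule property.

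First I will check that $M(v)$ is a linear subspace of $U_q(\mathfrak g)$. The map $a \mapsto \operatorname{ad}_L(a)(v)$ is linear in $a$, because $\Delta$ and $S$ are linear. Hence $M(v)$ is the image of the vector space $U_q(\mathfrak l_S)$ under a linear map, and is therefore a subspace.

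Next I will check that $M(v)$ is stable under $\operatorname{ad}_L(y)$ for every $y \in U_q(\mathfrak l_S)$. This is exactly the computation in Lemma~\ref{lem:inclusion}: for $w = \operatorname{ad}_L(x)(v)$ we have $\operatorname{ad}_L(y)(w) = \operatorname{ad}_L(yx)(v) \in M(v)$. I will also note why $\operatorname{ad}_L$ is a genuine left action. Since $\Delta$ is an algebra map and $S$ is an anti-algebra map,
\[
\operatorname{ad}_L(y)\operatorname{ad}_L(x)(b) = y_{(1)}x_{(1)}\, b\, S(x_{(2)})S(y_{(2)}) = \operatorname{ad}_L(yx)(b).
\]
Moreover $\operatorname{ad}_L(1)$ is the identity, because $\Delta(1) = 1\otimes 1$ and $S(1)=1$. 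So $M(v)$ is a $U_q(\mathfrak l_S)$-submodule of $U_q(\mathfrak g)$ under the restricted adjoint action. It is also closed under this action, since $U_q(\mathfrak l_S)$ is a Hopf subalgebra and so $\Delta(x)$ and $S$ stay inside $U_q(\mathfrak l_S)$; this is not even strictly needed, because the action takes place in $U_q(\mathfrak g)$ anyway.

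Finally, $v \in F_S(\lambda)$ means by definition that $M(v) \simeq V(\lambda)$ as $U_q(\mathfrak l_S)$-modules. This is the second requirement, so $M(v)$ is an embedding of type $\lambda$. Finite dimensionality is automatic here from $v\in F_S$.

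I do not expect a genuine obstacle. The only point needing care is confirming that $\operatorname{ad}_L$ is multiplicative, which the display above settles via the Hopf algebra axioms.
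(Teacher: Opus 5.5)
Your proposal is correct and follows the same route as the paper. The isomorphism $M(v)\simeq V(\lambda)$ holds by the definition of $F_S(\lambda)$, and $M(v)$ is an $\operatorname{ad}_L(U_q(\mathfrak l_S))$-stable subspace of $U_q(\mathfrak g)$ (the paper just says ``by construction''), so it is an embedding of type $\lambda$; your checks of linearity and of $\operatorname{ad}_L(y)\operatorname{ad}_L(x)=\operatorname{ad}_L(yx)$ from the Hopf axioms fill in what the paper leaves implicit.
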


\begin{proof}
Let $v \in F_S(\lambda)$. By definition of $F_S(\lambda)$, we have
\[
M(v) \simeq V(\lambda)
\]
as $U_q(\mathfrak{l}_S)$-modules.

On the other hand, by construction, $M(v)$ is a subspace of $U_q(\mathfrak{g})$
and is stable under the adjoint action of $U_q(\mathfrak{l}_S)$, i.e.
\[
\operatorname{ad}_L(x)(M(v)) \subseteq M(v)
\quad \text{for all } x \in U_q(\mathfrak{l}_S).
\]
Thus $M(v)$ is a $U_q(\mathfrak{l}_S)$-submodule of $U_q(\mathfrak{g})$.

Since $M(v)$ is both a submodule of $U_q(\mathfrak{g})$ and isomorphic to $V(\lambda)$,
it defines an embedding of type $\lambda$ into $U_q(\mathfrak{g})$.
\end{proof}

\begin{lemma}\label{lem:equivalence relation}
Let $v, w \in F_S(\lambda)$. Then $v$ and $w$ determine the same embedding of type $\lambda$
if and only if
\[
M(v) = M(w).
\]
\end{lemma}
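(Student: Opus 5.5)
The statement is essentially definitional, so the plan is to make precise what "the embedding determined by $v$" means and then unwind definitions. By the preceding Proposition, each $v \in F_S(\lambda)$ determines the embedding of type $\lambda$ given by the submodule $M(v) \subset U_q(\mathfrak g)$. An embedding of type $\lambda$ is, by definition, a $U_q(\mathfrak l_S)$-submodule $M \subset U_q(\mathfrak g)$ with $M \simeq V(\lambda)$. It is therefore a subset of $U_q(\mathfrak g)$, not an abstract isomorphism class or a chosen isomorphism. Two generators determine the same embedding exactly when the associated submodules coincide as subsets of $U_q(\mathfrak g)$.

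For the direction ($\Leftarrow$): if $M(v) = M(w)$, then the embeddings attached to $v$ and $w$ by the Proposition are literally the same subspace. Hence they are the same embedding. Nothing else is needed.

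For the direction ($\Rightarrow$): if $v$ and $w$ determine the same embedding $M$, then $M = M(v)$ and $M = M(w)$, so $M(v) = M(w)$.

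It is worth adding a remark that makes the equivalence more intrinsic, using Lemma~\ref{lem:inclusion} and irreducibility. For $v,w\in F_S(\lambda)$ the following are equivalent: $M(v)=M(w)$, $w\in M(v)$, and $v\in M(w)$. Indeed, if $w \in M(v)$, then Lemma~\ref{lem:inclusion} gives $M(w) \subseteq M(v)$. Since $M(w) \neq 0$ (as $w \neq 0$, because $V(\lambda)\neq 0$) and $M(v)\simeq V(\lambda)$ is irreducible, we get equality. The converse is immediate since $w \in M(w)$, which holds because $\operatorname{ad}_L(1)(w)=w$. This shows that "same embedding" is an equivalence relation on $F_S(\lambda)$, whose classes are $M(v)\cap F_S(\lambda) = M(v)\setminus\{0\}$. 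Here every nonzero vector of an irreducible $M(v)$ generates all of $M(v)$, again by Lemma~\ref{lem:inclusion} and irreducibility.

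The only real obstacle is conceptual rather than technical. One must fix the convention that an embedding means the image submodule rather than a specific isomorphism $V(\lambda)\to U_q(\mathfrak g)$. With the latter convention, the equivalence would hold only up to the scalar automorphisms of $V(\lambda)$ given by Schur's lemma, so I would state this convention explicitly at the start of the proof.
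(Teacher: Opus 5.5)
Your proof is correct and takes the same approach as the paper: both directions just unwind the definition of an embedding of type $\lambda$ as the image submodule $M(v)\subset U_q(\mathfrak g)$. Your extra remark, that $M(v)=M(w)$ iff $w\in M(v)$ and that the equivalence classes are $M(v)\setminus\{0\}$ (via Lemma~\ref{lem:inclusion} and irreducibility), is also correct, though it goes beyond what the paper proves here.
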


\begin{proof}
Suppose first that $M(v) = M(w)$. Then both $v$ and $w$ generate the same
$U_q(\mathfrak{l}_S)$-submodule of $U_q(\mathfrak{g})$. Hence they determine the same
embedding of type $\lambda$ by definition.

Conversely, suppose that $v$ and $w$ determine the same embedding.
By definition, this means that the submodules of $U_q(\mathfrak{g})$ generated by $v$
and $w$ coincide as $U_q(\mathfrak{l}_S)$-submodules. That is,
\[
\operatorname{ad}_L(U_q(\mathfrak{l}_S)) \cdot v
=
\operatorname{ad}_L(U_q(\mathfrak{l}_S)) \cdot w.
\]
Hence $M(v) = M(w)$.
\end{proof}

\begin{proposition}\label{prop:embedding bijection}
Let $\lambda \in P^+$. There is a natural bijection between the set of embeddings of type $\lambda$ into $U_q(\mathfrak{g})$ and the set
\[
\{\, M(v) \subset U_q(\mathfrak{g}) \mid v \in F_S(\lambda) \,\}
\]
of $U_q(\mathfrak{l}_S)$-submodules of $U_q(\mathfrak{g})$ that are isomorphic to $V(\lambda)$.
\end{proposition}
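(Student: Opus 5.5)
The plan is to write down the two maps explicitly and check that they are inverse to each other. Let $\mathcal{E}_\lambda$ be the set of embeddings of type $\lambda$, that is, the $U_q(\mathfrak l_S)$-submodules $M \subset U_q(\mathfrak g)$ with $M \simeq V(\lambda)$. Let $\mathcal{M}_\lambda := \{ M(v) \mid v \in F_S(\lambda)\}$. Define $\Psi : \mathcal{M}_\lambda \to \mathcal{E}_\lambda$ by sending $M(v)$ to itself, regarded as a submodule. By the preceding Proposition, $M(v)$ is an embedding of type $\lambda$, so $\Psi$ is well defined. It does not depend on the chosen generator $v$, because by Lemma~\ref{lem:equivalence relation} two generators give the same embedding exactly when $M(v)=M(w)$. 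Since $\Psi$ is the identity on underlying subspaces, it is injective.

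The real content is surjectivity, and this is where I would spend the effort. Let $M \in \mathcal{E}_\lambda$ be arbitrary, and choose any nonzero $v \in M$; a highest weight vector of $M \simeq V(\lambda)$ is a convenient choice.

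1. Since $M$ is $\operatorname{ad}_L(U_q(\mathfrak l_S))$-stable and finite-dimensional, $M(v) \subseteq M$ and $\dim M(v) \le \dim M < \infty$. Hence $v \in F_S$.
2. The space $M(v)$ is a nonzero submodule of $M$, because it contains $v = \operatorname{ad}_L(1)(v)$. Here I use that $\operatorname{ad}_L$ is an algebra action with $\operatorname{ad}_L(1)=\mathrm{id}$, which is already implicit in the proof of Lemma~\ref{lem:inclusion}.
3. Since $M \simeq V(\lambda)$ is irreducible, $M(v)=M$. Therefore $M(v)\simeq V(\lambda)$, so $v\in F_S(\lambda)$, and $\Psi(M(v)) = M$.

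The only step that uses anything beyond definitions is the irreducibility of $V(\lambda)$ as a $U_q(\mathfrak l_S)$-module, which is exactly how the paper introduces $V(\lambda)$. So I expect no serious obstacle. The one point needing care is that, for $v\neq 0$, $M(v)$ genuinely contains $v$, which prevents $M(v)$ from being zero.

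Finally, I would note that the same argument shows that every nonzero vector of an embedding $M$ lies in $F_S(\lambda)$ and generates $M$. This explains why the correspondence is between embeddings and submodules, and not between embeddings and generators, and it anticipates the non-uniqueness of generators discussed later in the paper.
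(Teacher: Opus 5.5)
Your proof is correct and matches the paper's argument. The paper also sends $M(v)$ to itself, and conversely picks any nonzero $v$ in an embedding $M$, notes that $M(v)\subseteq M$, and uses irreducibility to conclude $M(v)=M$; you are slightly more explicit in checking that $v\in F_S$ (hence $v\in F_S(\lambda)$) and that $M(v)\neq 0$, which the paper leaves implicit.
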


\begin{proof}
Let $\mathcal{E}_\lambda$ denote the set of embeddings of type $\lambda$, and let
\[
\mathcal{M}_\lambda := \{\, M(v) \subset U_q(\mathfrak{g}) \mid v \in F_S(\lambda) \,\}.
\]

We define a map
\[
\Psi : \mathcal{E}_\lambda \longrightarrow \mathcal{M}_\lambda
\]
as follows: given an embedding $M \subset U_q(\mathfrak{g})$ with $M \simeq V(\lambda)$,
choose any nonzero element $v \in M$. Since $M$ is a $U_q(\mathfrak{l}_S)$-module,
the cyclic submodule generated by $v$ satisfies
\[
M(v) = \operatorname{ad}_L(U_q(\mathfrak{l}_S)) \cdot v \subseteq M.
\]
As $M$ is irreducible, we must have $M(v) = M$. Thus $\Psi(M) = M(v) = M$,
showing that $\Psi$ is well-defined.

Conversely, define
\[
\chi : \mathcal{M}_\lambda \longrightarrow \mathcal{E}_\lambda
\]
by sending a submodule $M(v)$ to itself, viewed as an embedding of type $\lambda$.

It is immediate that $\chi$ and $\Psi$ are inverse to each other:
for any $M \in \mathcal{E}_\lambda$, we have $\chi(\Psi(M)) = M$,
and for any $M(v) \in \mathcal{M}_\lambda$, we have $\Psi(\chi(M(v))) = M(v)$.

Therefore, $\chi$ and $\Psi$ define a bijection between $\mathcal{E}_\lambda$
and $\mathcal{M}_\lambda$, completing the proof.
\end{proof}

\begin{remark}
    The description in Proposition~\ref{prop:embedding bijection} admits an equivalent formulation in terms of generators. Define an equivalence relation on $F_S(\lambda)$ (see Lemma ~\ref{lem:equivalence relation}) by
\[
v \sim w \Longleftrightarrow M(v)=M(w).
\]
Then the assignment $v \mapsto M(v)$ induces a bijection
\[
F_S(\lambda)/\sim \, \, \longrightarrow \{ M(v) \subset U_q(\mathfrak{g}) \mid v \in F_S(\lambda) \}.
\]
In particular, embeddings of type $\lambda $ may be identified with equivalence classes of generators in $F_S(\lambda)$.
\end{remark}

\begin{remark}\label{lem:hwv}
Let $v \in F_S$. Then $M(v)$ contains a highest weight vector with respect to the adjoint action of $U_q(\mathfrak{l}_S)$.
\end{remark}

\begin{lemma}\label{lem:annhilation}
Let $\lambda \in \Lambda_S$. Then there exists $\widetilde{v} \in F_S(\lambda)$ such that
\[
\operatorname{ad}_L(E_i)(\widetilde{v})=0 \quad \forall i \in S.
\]
\end{lemma}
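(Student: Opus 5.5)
Since $\lambda \in \Lambda_S$, the set $F_S(\lambda)$ is nonempty. Fix some $v \in F_S(\lambda)$, so that $M(v) \simeq V(\lambda)$ as $U_q(\mathfrak l_S)$-modules. The plan is to replace $v$ by a highest weight vector of $M(v)$ and check that this vector again lies in $F_S(\lambda)$.

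First we show that $M(v)$ is a weight module for the $K_j$'s. For $j=1,\dots,r$ we have $\operatorname{ad}_L(K_j)(b)=K_jbK_j^{-1}$, and $U_q(\mathfrak g)$ decomposes into weight spaces for this conjugation action. The weights occurring are elements of $Q$, and $K_j$ acts on weight $\mu$ by $q^{\langle \mu,\alpha_j\rangle}$ (up to the normalization in \cite{joseph2012quantum}). The finite-dimensional space $M(v)$ is stable under these commuting semisimple operators, so it is a direct sum of its weight spaces.

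Next we produce the vector annihilated by the $E_i$. For $i\in S$, $\operatorname{ad}_L(E_i)$ raises weight by $\alpha_i$, and the set of weights of $M(v)$ is finite. Choose a weight $\mu$ of $M(v)$ that is maximal with respect to the order $\mu\le\mu'$ iff $\mu'-\mu\in\sum_{i\in S}\mathbb N\alpha_i$, and let $\widetilde v\neq 0$ be a vector of weight $\mu$. Then $\operatorname{ad}_L(E_i)(\widetilde v)$ has weight $\mu+\alpha_i$, which is not a weight of $M(v)$ by maximality. Hence $\operatorname{ad}_L(E_i)(\widetilde v)=0$ for all $i\in S$. This is exactly the vector promised by Remark~\ref{lem:hwv}, and the argument above supplies its proof.

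Finally we check that $\widetilde v\in F_S(\lambda)$. By Lemma~\ref{lem:inclusion}, $M(\widetilde v)\subseteq M(v)$, so $M(\widetilde v)$ is finite-dimensional and $\widetilde v\in F_S$. Moreover $M(\widetilde v)$ is a nonzero submodule of the irreducible module $M(v)\simeq V(\lambda)$, hence $M(\widetilde v)=M(v)\simeq V(\lambda)$. Therefore $\widetilde v\in F_S(\lambda)$ and $\operatorname{ad}_L(E_i)(\widetilde v)=0$ for all $i\in S$.

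The main obstacle is the first step: justifying that $M(v)$ decomposes into weight spaces. One must check that conjugation by the $K_j$ on $U_q(\mathfrak g)$ is diagonalizable with eigenvalues indexed by the root lattice, which follows from the PBW-type basis of monomials that are homogeneous of definite weight. If one instead relies on $M(v)\simeq V(\lambda)$ being a type~1 module, then $\widetilde v$ can be taken as the preimage of the highest weight vector of $V(\lambda)$ under the isomorphism, and the argument shortens.
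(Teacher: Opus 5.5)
Your proof is correct and follows the paper's argument: take a highest weight vector $\widetilde v \in M(v)$, then use irreducibility of $M(v)\simeq V(\lambda)$ to get $M(\widetilde v)=M(v)$, so $\widetilde v\in F_S(\lambda)$. The only difference is that you prove this vector exists, via the $Q$-grading of $U_q(\mathfrak g)$ under conjugation by the $K_j$ (which uses that $q$ is not a root of unity, so that distinct weights are separated) and a choice of maximal weight, whereas the paper simply cites Remark~\ref{lem:hwv}; your alternative of transporting the highest weight vector of $V(\lambda)$ along the isomorphism is also valid and shorter.
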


\begin{proof}
Let $v \in F_S(\lambda)$, so $M(v) \simeq V(\lambda)$. By the Remark~ \ref{lem:hwv}, $M(v)$ contains a highest weight vector $\widetilde{v}$. Then $M(\widetilde{v}) = M(v)$ by irreducibility. Thus $\widetilde{v} \in F_S(\lambda)$.
\end{proof}

\begin{proposition}
We have
\[
\Lambda_S
=
\left\{
\lambda \in P^+ \;\middle|\;
\exists\, v \in F_S \text{ such that }
\operatorname{ad}_L(E_i)(v)=0 \ \forall i \in S,
\text{ and } v \text{ has weight } \lambda
\right\}.
\]
\end{proposition}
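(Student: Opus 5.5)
The plan is to prove the two inclusions separately. Throughout, the weight of an element $u\in U_q(\mathfrak g)$ means its weight for the adjoint action of the torus, $\operatorname{ad}_L(K_j)(u)=K_juK_j^{-1}=q^{\langle \mu,\alpha_j\rangle}u$. Since $U_q(\mathfrak l_S)$ contains all $K_j^{\pm1}$, weights are taken with respect to all $j=1,\dots,r$.

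\emph{Inclusion $\subseteq$.} Let $\lambda\in\Lambda_S$. By Lemma~\ref{lem:annhilation} there is a vector $\widetilde v\in F_S(\lambda)$ with $\operatorname{ad}_L(E_i)(\widetilde v)=0$ for all $i\in S$. It was obtained through Remark~\ref{lem:hwv}, so we may take it to be a weight vector. It remains to show that its weight is $\lambda$. Fix an isomorphism $M(\widetilde v)\simeq V(\lambda)$. Under it, $\widetilde v$ is a nonzero weight vector annihilated by all $E_i$, $i\in S$. In the irreducible module $V(\lambda)$ such vectors span exactly the highest weight line. Hence $\widetilde v$ has weight $\lambda$. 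We also have $\widetilde v\in F_S$, and $\lambda\in P^+$ by the Remark on $\Lambda_S\subseteq P^+$. So $\lambda$ lies in the right-hand side.

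\emph{Inclusion $\supseteq$.} Let $\lambda\in P^+$, and let $v\in F_S$ be nonzero of weight $\lambda$ with $\operatorname{ad}_L(E_i)(v)=0$ for $i\in S$. We must show $M(v)\simeq V(\lambda)$, so that $v\in F_S(\lambda)$. The steps are as follows.
\begin{enumerate}
\item Use the triangular decomposition $U_q(\mathfrak l_S)=U^-_SU^0U^+_S$. The factor $U^+_S$ acts on $v$ through its counit, because $v$ is killed by every $\operatorname{ad}_L(E_i)$. The factor $U^0$ acts on $v$ by scalars $q^{\langle\lambda,\alpha_j\rangle}$. Therefore $M(v)=\operatorname{ad}_L(U^-_S)\cdot v$, and $M(v)$ is a highest weight module of highest weight $\lambda$. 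In particular its $\lambda$-weight space is $\mathbb C v$, and all other weights lie in $\lambda-\sum_{i\in S}\mathbb N\alpha_i$.
\item Since $v\in F_S$, the module $M(v)$ is finite-dimensional. All the $K_j$ act on it by powers of $q$, so it is a type~1 module. Finite-dimensional type~1 $U_q(\mathfrak l_S)$-modules are completely reducible (see \cite{joseph2012quantum, klimyk2012quantum}).
\item A highest weight module has a unique maximal proper submodule, namely the sum of all submodules not containing $v$. A completely reducible module with a unique maximal submodule is irreducible. Hence $M(v)$ is irreducible of highest weight $\lambda$, so $M(v)\simeq V(\lambda)$, and $\lambda\in\Lambda_S$.
\end{enumerate}

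I expect the main obstacle to be step~2. This is where finite-dimensionality has to be turned into irreducibility. One must check that the adjoint action really produces a type~1 module, so that complete reducibility applies. One must also be precise about what $V(\lambda)$ means when $\lambda$ is dominant only with respect to $S$ but carries the full torus weight. The remaining care points are routine: one must insist that $v\neq 0$, and in step~1 one must confirm that $\operatorname{ad}_L$ is an algebra action, which is the fact already used in Lemma~\ref{lem:inclusion}.
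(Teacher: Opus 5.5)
Your proposal is correct and follows the same two-inclusion argument as the paper: Lemma~\ref{lem:annhilation} for $\subseteq$, and ``a finite-dimensional highest weight module is irreducible'' for $\supseteq$. It is more careful than the paper's proof on two points the paper leaves implicit: you check that the singular vector actually has weight $\lambda$, and you justify irreducibility via complete reducibility of type~1 modules together with the unique maximal submodule of a highest weight module.
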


\begin{proof}
We prove the two inclusions separately.

\medskip

\noindent

Let $\lambda \in \Lambda_S$. Then by Lemma~\ref{lem:annhilation}, there exists $v\in F_S(\lambda)$ such that  
\[
\operatorname{ad}_L(E_i)(v)=0 \quad \forall i \in S,
\]
Thus $\lambda$ belongs to the set on the right-hand side.

\medskip

\noindent

Conversely, suppose that there exists $v \in F_S$ such that
\[
\operatorname{ad}_L(E_i)(v)=0 \quad \forall i \in S,
\]
and $v$ has weight $\lambda$.

Then $M(v)$ is a finite-dimensional $U_q(\mathfrak{l}_S)$-module generated by
a highest weight vector of weight $\lambda$. It follows that $M(v)$ is a highest weight module of highest weight $\lambda$.

Since $M(v)$ is finite-dimensional, it is irreducible and hence isomorphic to
$V(\lambda)$. Therefore $\lambda \in \Lambda_S$.

\medskip

This proves the desired equality.
\end{proof}

\begin{definition}
Define the map
\[
\Phi : F_S \longrightarrow \{ \text{isomorphism classes of finite-dimensional } U_q(\mathfrak{l}_S)\text{-modules} \}
\]
by
\[
\Phi(v) = [M(v)],
\]
where $M(v) = \operatorname{ad}_L(U_q(\mathfrak{l}_S)) \cdot v$.
\end{definition}

\begin{definition}
For a finite-dimensional irreducible $U_q(\mathfrak{l}_S)$-module $V(\lambda)$, define the fiber
\[
\Phi^{-1}(V(\lambda)) := \{\, v \in F_S \mid M(v) \simeq V(\lambda) \,\}= F_S(\lambda).
\]
\end{definition}

\begin{proposition}\label{thm:fiber_collapse}
We assume that $S=\lbrace 1,2, \cdots, r\rbrace \setminus\lbrace x \rbrace$ is cominuscule.
Then for all $n \ge 0$, we have
\[
M(K_{-2n\varpi_x}) \simeq V(-\alpha_x).
\]
In particular,
\[
K_{-2n\varpi_x} \in \Phi^{-1}(V(-\alpha_x)) \quad \text{for all } n \ge 0.
\]
\end{proposition}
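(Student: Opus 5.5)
The plan is to prove the proposition by importing Krähmer's computation \cite[Proposition 1]{krahmer2004dirac} for the cominuscule parabolic. I would not try to build the module by hand from the defining relations. The weakest step is the weight normalization in the second paragraph: with the bare convention $\operatorname{ad}_L(K_j)(v)=K_jvK_j^{-1}$, the toral part alone does not visibly produce the weight $-\alpha_x$, so the statement hinges on matching conventions.

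\emph{Preliminary reductions.} First I record that $K_{-2n\varpi_x}\in F_S$. For $i\in S$ we have $\langle\varpi_x,\alpha_i\rangle=0$. So the $q$-commutators coming from $\Delta(E_i)$, $\Delta(F_i)$ and the antipode can be written down explicitly on $K_{-2n\varpi_x}$. Together with the conjugation formulas, this shows that the $\operatorname{ad}_L$-orbit of $K_{-2n\varpi_x}$ under $U_q(\mathfrak l_S)$ is finite dimensional. Next I reduce all $n$ to the case $n=1$. The identity $K_{-2n\varpi_x}=(K_{-2\varpi_x})^n$ holds, and $\operatorname{ad}_L$ is a module-algebra action, i.e.\ $\operatorname{ad}_L(a)(bc)=\operatorname{ad}_L(a_{(1)})(b)\,\operatorname{ad}_L(a_{(2)})(c)$. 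Since $K_{-2\varpi_x}$ is grouplike and $q$-commutes with $U_q(\mathfrak l_S)$ in a uniform way, I expect $M(K_{-2n\varpi_x})$ to be isomorphic to $M(K_{-2\varpi_x})$, with the isomorphism given by multiplication by $K_{-2(n-1)\varpi_x}$. The case $n=0$ would then be absorbed into the same convention.

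\emph{Identification of the module.} For the key case I would invoke \cite[Proposition 1]{krahmer2004dirac}. In the cominuscule setting it describes the locally finite piece attached to $K_{-2\varpi_x}$ via the Joseph--Letzter decomposition \cite{joseph1992local} of the locally finite part as $\bigoplus_\lambda \operatorname{ad}_L(U_q(\mathfrak g))K_{-2\lambda}$. It also identifies the $U_q(\mathfrak l_S)$-isotypic component through $K_{-2\varpi_x}$. Cominuscule means that $\alpha_x$ occurs with coefficient one in the highest root. Hence $\mathfrak g=\mathfrak u^-\oplus\mathfrak l_S\oplus\mathfrak u^+$ with abelian $\mathfrak u^{\pm}$, and $\mathfrak u^-$ is an irreducible $\mathfrak l_S$-module with highest weight $-\alpha_x$. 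This is the module $V(-\alpha_x)$ of the statement. Note that $-\alpha_x$ is $S$-dominant, since $\langle-\alpha_x,\alpha_i\rangle\ge0$ for $i\in S$.

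\emph{Highest weight vector.} I would exhibit a vector in $M(K_{-2\varpi_x})$ annihilated by $\operatorname{ad}_L(E_i)$ for all $i\in S$ and of weight $-\alpha_x$ in the convention of \cite{krahmer2004dirac}. The natural candidate is the image of $F_x$ under Krähmer's identification. The converse inclusion in the characterization of $\Lambda_S$ (the proposition following Lemma~\ref{lem:annhilation}) then gives $M(K_{-2\varpi_x})\simeq V(-\alpha_x)$. Membership in $\Phi^{-1}(V(-\alpha_x))$ follows immediately from the definition of the fiber.

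\emph{Expected obstacle.} The main difficulty is the second step, reconciling weight conventions. The weight has to be read off through Krähmer's identification, not through the naive toral action. I would therefore carefully align the coproduct convention of \cite{joseph2012quantum}, with $\Delta(E_i)=E_i\otimes1+K_i\otimes E_i$, against the convention in \cite{krahmer2004dirac}.
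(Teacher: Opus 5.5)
Your route is the same as the paper's, and it fails at the same step. In fact the statement is false as written, so no alignment of conventions with Kr\"ahmer can rescue it.

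\textbf{The highest weight vector is not in the module.} The vector that Kr\"ahmer's computation supplies (your ``image of $F_x$'', the paper's $x_1$) is $\operatorname{ad}_L(F_x)(K_{-2n\varpi_x})$. It does not lie in $M(K_{-2n\varpi_x})=\operatorname{ad}_L(U_q(\mathfrak l_S))\cdot K_{-2n\varpi_x}$, because $x\notin S$ means $F_x\notin U_q(\mathfrak l_S)$. The paper's proof makes exactly this error when it asserts $x_1\in M(K_\mu)$.

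\textbf{No vector of weight $-\alpha_x$ exists there at all.} Since $\langle\varpi_x,\alpha_i\rangle=0$ for $i\in S$, the element $K_{-2n\varpi_x}$ commutes with $E_i,F_i$ ($i\in S$) and with every $K_j^{\pm1}$, hence with all of $U_q(\mathfrak l_S)$. Therefore $\operatorname{ad}_L(a)(K_{-2n\varpi_x})=K_{-2n\varpi_x}\,a_{(1)}S(a_{(2)})=\epsilon(a)K_{-2n\varpi_x}$, with $\epsilon$ the counit. So $M(K_{-2n\varpi_x})=\mathbb{C}K_{-2n\varpi_x}\simeq V(0)$, the trivial module. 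The explicit $q$-commutators in your preliminary step all vanish, and writing them out would already have shown this. For $n=0$ it is simply $M(1)=\mathbb{C}\cdot 1$.

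\textbf{The weight zero you noticed is not a convention issue.} $K_j$ is grouplike, so $\operatorname{ad}_L(K_j)$ is conjugation in any standard convention. That action is part of the very $U_q(\mathfrak l_S)$-module structure whose isomorphism class is being asserted. On the highest weight space of $V(-\alpha_x)$, $\operatorname{ad}_L(K_x)$ must act by $q^{-\langle\alpha_x,\alpha_x\rangle}\neq 1$. Hence $M(K_{-2n\varpi_x})\not\simeq V(-\alpha_x)$ for every $n\ge 0$.

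\textbf{A second, independent gap.} Your last step has a further problem, again shared with the paper (``since $M(K_\mu)$ \dots\ contains the highest weight vector $x_1$, it follows that $M(K_\mu)=M(x_1)$''). A highest weight vector $w$ of weight $\lambda$ inside $M(v)$ only gives $V(\lambda)\simeq M(w)\subseteq M(v)$. The characterization of $\Lambda_S$ describes $M(w)$, not $M(v)$.

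\textbf{What the argument does prove.} It proves the corrected statement for the generators $x^{(n)}:=\operatorname{ad}_L(F_x)(K_{-2n\varpi_x})$ with $n\ge 1$:
\begin{itemize}
\item each $x^{(n)}$ is a nonzero multiple of $F_xK_{-2n\varpi_x}K_x$;
\item it lies in the full locally finite part of $U_q(\mathfrak g)$, hence in $F_S$, because $K_{-2n\varpi_x}$ does;
\item it has $\operatorname{ad}_L$-weight $-\alpha_x$;
\item it is killed by $\operatorname{ad}_L(E_i)$ for $i\in S$, since $[E_i,F_x]=0$ and $K_{-2n\varpi_x}$ commutes with $E_i$.
\end{itemize}
Thus $M(x^{(n)})\simeq V(-\alpha_x)$. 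It is these elements, not the $K_{-2n\varpi_x}$, that give an infinite family in $F_S(-\alpha_x)$.
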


\begin{proof}
Fix $n \ge 0$ and consider $\mu=-2n\varpi_x$, and the element $K_\mu\in F_S$ due to (\cite{joseph2012quantum}, \S 7.1.3). Now by \cite[Proposition 1]{krahmer2004dirac}, the element
\[
x_1 := \operatorname{ad}_L(F_x)(K_\mu)
\]
is a highest weight vector in $F_S$ with highest weight $-\alpha_x$.

Since $x_1 \in M(K_\mu)$, we have
\[
M(x_1) \subseteq M(K_\mu).
\]
On the other hand, $x_1$ generates an irreducible highest weight module
of highest weight $-\alpha_x$, hence
\[
M(x_1) \simeq V(-\alpha_x).
\]

Since $M(K_\mu)$ is generated by $K_\mu$ and contains the highest weight vector $x_1$,
it follows that
\[
M(K_\mu) = M(x_1).
\]
Thus
\[
M(K_{-2n\varpi_x}) \simeq V(-\alpha_x),
\]
which proves the claim.
\end{proof}

\begin{corollary}\label{cor:infinite_fiber}
The fiber $\Phi^{-1}(V(-\alpha_x))$ contains
$\lbrace K_{-2n\varpi_x}\mid n\geq 0 \rbrace $.
As a consequence, $\lbrace K_{-2n\varpi_x}\mid n\geq 0 \rbrace \subseteq F_S(-\alpha_{x})$ for the cominuscule case.
\end{corollary}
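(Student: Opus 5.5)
The corollary is a direct consequence of Proposition~\ref{thm:fiber_collapse} together with the definition of the fiber, so the plan is to unwind definitions. Fix $n \ge 0$ and put $\mu = -2n\varpi_x$. First I will record that $K_\mu \in F_S$; this was already used in the proof of Proposition~\ref{thm:fiber_collapse}, via the cited result of Joseph, so $M(K_\mu)$ is defined and finite-dimensional. Proposition~\ref{thm:fiber_collapse} then gives $M(K_\mu) \simeq V(-\alpha_x)$ as $U_q(\mathfrak l_S)$-modules. By the definition of $\Phi$ this says $\Phi(K_\mu) = [V(-\alpha_x)]$, so $K_\mu$ lies in $\Phi^{-1}(V(-\alpha_x))$. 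Since $n$ was arbitrary, the whole family $\{K_{-2n\varpi_x} \mid n \ge 0\}$ lies in the fiber, which is the first assertion.

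For the second assertion I will use the identification
\[
\Phi^{-1}(V(\lambda)) = F_S(\lambda),
\]
which is built into the definition of the fiber. Applying it with $\lambda = -\alpha_x$ turns the first assertion into the inclusion $\{K_{-2n\varpi_x}\mid n\ge 0\}\subseteq F_S(-\alpha_x)$. I will also point out that the elements $K_{-2n\varpi_x}$ are pairwise distinct in $U_q(\mathfrak g)$, because distinct group-likes $K_\mu$ are linearly independent. This remark is not needed for the inclusion itself. Its role is to justify the interpretation, stated in the introduction, that the fiber is infinite.

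The main point needing care is not the logic but the indexing. The label $V(-\alpha_x)$ should be read as the irreducible $U_q(\mathfrak l_S)$-module with highest weight $-\alpha_x$, restricted to the Levi. Strictly speaking, $-\alpha_x$ need not lie in $P^+$ for $\mathfrak g$; it is dominant only with respect to $S$, since $\langle -\alpha_x, \alpha_i\rangle \ge 0$ for $i \in S$. I will note that $F_S(\lambda)$ is to be understood in that Levi-dominant sense, consistent with how Proposition~\ref{thm:fiber_collapse} uses it. Once that convention is fixed, the corollary follows immediately.
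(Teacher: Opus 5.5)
Your deduction is exactly how the paper intends the corollary to follow: Proposition~\ref{thm:fiber_collapse} read through $\Phi^{-1}(V(\lambda))=F_S(\lambda)$. The trouble is that this input is false, so the corollary is false as stated. The delicate point is not the indexing of $V(-\alpha_x)$. Since $\langle\varpi_x,\alpha_i\rangle=0$ for $i\in S$, the element $K_\mu$ with $\mu=-2n\varpi_x$ commutes with $E_i$ and $F_i$ for every $i\in S$, and with every $K_j^{\pm1}$. With the paper's coproduct this gives, for $i\in S$ and all $j$,
\[
\operatorname{ad}_L(E_i)(K_\mu)=E_iK_\mu-K_iK_\mu K_i^{-1}E_i=E_iK_\mu-K_\mu E_i=0,\qquad
\operatorname{ad}_L(F_i)(K_\mu)=(F_iK_\mu-K_\mu F_i)K_i=0,\qquad
\operatorname{ad}_L(K_j)(K_\mu)=K_\mu .
\]
Hence $\operatorname{ad}_L(a)(K_\mu)=\varepsilon(a)K_\mu$ for all $a\in U_q(\mathfrak l_S)$, so $M(K_\mu)$ is the line spanned by $K_\mu$ with trivial action, i.e.\ $V(0)$. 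It is not $V(-\alpha_x)$: $K_\mu$ has weight $0$, while every weight of $V(-\alpha_x)$ has $\alpha_x$-coefficient $-1$. Already for $\mathfrak g=\mathfrak{sl}_2$ and $S=\varnothing$ (a cominuscule case), $K_1$ acts trivially on $M(K_1^{-n})$ but by a nontrivial power of $q$ on $V(-\alpha_1)$. For $n=0$, $M(K_0)=M(1)$ is just the scalars. So $K_{-2n\varpi_x}\in F_S(0)$ for every $n$, and the claimed inclusion into $F_S(-\alpha_x)$ fails. Computing $M(K_\mu)$ directly, instead of relying on the proposition, exposes this at once.

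The faulty step in the paper's proof of Proposition~\ref{thm:fiber_collapse} is the claim $x_1=\operatorname{ad}_L(F_x)(K_\mu)\in M(K_\mu)$. Because $x\notin S$, $F_x\notin U_q(\mathfrak l_S)$, so Kr\"ahmer's highest weight vector lies in $\operatorname{ad}_L(U_q(\mathfrak g))(K_\mu)$, not in $M(K_\mu)$.

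What survives is the statement for the vectors $x_1$ themselves. For $n\ge 1$, $\operatorname{ad}_L(F_x)(K_{-2n\varpi_x})=(F_xK_\mu-K_\mu F_x)K_x$ is a nonzero multiple of $F_xK_{\alpha_x-2n\varpi_x}$; for $n=0$ it is $0$. A direct computation shows this vector is killed by $\operatorname{ad}_L(E_i)$ for $i\in S$ and has weight $-\alpha_x$. It lies in $F_S$ because $K_{-2n\varpi_x}$ lies in the locally finite part of the full adjoint action, which is $\operatorname{ad}_L(U_q(\mathfrak g))$-stable. Thus $\{\operatorname{ad}_L(F_x)(K_{-2n\varpi_x})\mid n\ge 1\}\subseteq F_S(-\alpha_x)$. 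Your linear-independence remark, applied to the distinct Cartan parts $K_{\alpha_x-2n\varpi_x}$, then shows this fiber is infinite, which is what the corollary was meant to assert.
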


\section{Structure of cyclic adjoint modules}

\subsection*{Partial order structure of cyclic adjoint modules}

We define a relation on $F_S$ by 
\[
w \le v \quad \Longleftrightarrow \quad M(w) \subseteq M(v) \quad \text{for} \,\,\, v,w\in F_S.
\]

Let
\[
\mathcal{P}_S := \{ M(v) \mid v \in F_S \}.
\]

The relation $\le$ defines a preorder on $F_S$, and induces a partial order on $\mathcal{P}_S$.

Since $M(v)$ is finite-dimensional, every strictly increasing chain of its submodules is finite. In particular, if $v \in F_S$ is such that $M(v)$ is irreducible, then $M(v)$ is minimal in $\mathcal{P}_S$.

\begin{remark}
    The partial order structure $(\mathcal{P}_S, \subseteq)$, in general, is not a lattice.
\end{remark}

\begin{proposition}\label{prop:hw_decomposition_refined}
Let $v \in F_S$. Then there exist highest weight vectors
$w_1, \dots, w_k \in M(v)$ such that each $M(w_i)$ is an irreducible
$U_q(\mathfrak{l}_S)$-submodule of $M(v)$ and
\[
M(v) = \sum_{i=1}^k M(w_i).
\]
\end{proposition}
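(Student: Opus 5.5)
The plan is to use complete reducibility of finite-dimensional $U_q(\mathfrak{l}_S)$-modules of type 1. For $v \in F_S$, the cyclic module $M(v)$ is finite-dimensional by definition and is a $U_q(\mathfrak{l}_S)$-submodule of $U_q(\mathfrak{g})$ under $\operatorname{ad}_L$.

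First we check that $M(v)$ is a weight module on which the $K_j$ act semisimply with integral weights. Under the adjoint action, $\operatorname{ad}_L(K_j)(b)=K_jbK_j^{-1}$. Every element of $U_q(\mathfrak{g})$ is a finite sum of weight vectors for this conjugation action, with weights in the root lattice $Q$. Since $M(v)$ is stable under this action, it is spanned by weight vectors lying in $M(v)$. Hence $M(v)$ is a finite-dimensional weight module with weights in $Q\subseteq P$.

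Next we apply complete reducibility for the Levi: finite-dimensional type-1 $U_q(\mathfrak{l}_S)$-modules are semisimple (see \cite{joseph2012quantum, klimyk2012quantum}). This gives a decomposition
\[
M(v)=N_1\oplus\cdots\oplus N_k
\]
into irreducible submodules. The main point to justify is that $U_q(\mathfrak{l}_S)$ is the semisimple part $U_q(\mathfrak{l}'_S)$ tensored with a commutative group algebra of $K_j$'s acting diagonalizably, so semisimplicity of the derived part plus diagonalizability of the torus suffices.

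For each $i$, pick a highest weight vector $w_i\in N_i$, which exists by Remark~\ref{lem:hwv} applied inside $N_i$ (or simply a nonzero vector of maximal weight). Since $N_i$ is irreducible and $w_i \in N_i$, we have $M(w_i)\subseteq N_i$ and $M(w_i)\neq 0$, so irreducibility gives $M(w_i)=N_i$. Moreover $w_i\in M(v)$, so $M(w_i)\subseteq M(v)$ by Lemma~\ref{lem:inclusion}. Summing, we obtain
\[
M(v)=\sum_{i=1}^k M(w_i),
\]
and in fact the sum is direct.

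The main obstacle is the complete reducibility step. Here one must confirm that the adjoint module is of type 1, meaning the $K_j$ act by $q^{\langle\mu,\alpha_j\rangle}$ with no sign twists. This holds because the weights arise from conjugation on monomials in the $E_i$, $F_i$, $K_j$, and the twist is trivial there.
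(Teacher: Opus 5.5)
Your argument is correct, but it follows a different route from the paper's. The paper inducts on $\dim M(v)$. It takes a highest weight vector $w_1$, passes to $M(v)/M(w_1)$, picks a highest weight vector $\overline{w}_2$ there, and asserts that an arbitrary preimage $w_2$ generates an irreducible submodule.

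As written, that step is unjustified. A lift of $\overline{w}_2$ need not be killed by the $\operatorname{ad}_L(E_i)$, and $M(w_2)$ can be strictly larger than its image. For example, in $V\oplus W$ with $V\not\simeq W$, adding any nonzero vector of $V$ to the highest weight vector of $W$ gives a lift that generates all of $V\oplus W$. Moreover, the proposition says precisely that $M(v)$ is a sum of simple submodules, i.e.\ that $M(v)$ is semisimple, so complete reducibility cannot be avoided. The paper's induction works only if each lift is chosen inside a $U_q(\mathfrak{l}_S)$-stable complement, and then it is just your argument unrolled step by step.

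Your version makes this input explicit and yields a direct sum. Your parenthetical about a vector of maximal weight also actually proves Remark~\ref{lem:hwv}, which the paper only asserts: each $\operatorname{ad}_L(E_i)$ raises weights by $\alpha_i$, so such a vector is annihilated.

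One correction to your justification of complete reducibility. $U_q(\mathfrak{l}_S)$ is not a tensor product of $U_q(\mathfrak{l}'_S)$ with a commutative group algebra, because $K_j$ with $j\notin S$ need not commute with $E_i,F_i$ for $i\in S$. The clean reduction is to split $M(v)$ by weights modulo $\sum_{i\in S}\mathbb{Z}\alpha_i$:
\begin{itemize}
\item Each piece is $U_q(\mathfrak{l}_S)$-stable.
\item Within a piece, the full weight is determined by the eigenvalues of $K_i$, $i\in S$, because the Gram matrix of $\{\alpha_i\}_{i\in S}$ is positive definite.
\item Hence every $U_q(\mathfrak{l}'_S)$-submodule of a piece is automatically a $U_q(\mathfrak{l}_S)$-submodule, and complete reducibility transfers from $U_q(\mathfrak{l}'_S)$.
\end{itemize}
This step, like your separation of $M(v)$ into weight components, uses that $q$ is not a root of unity, which the paper tacitly assumes.
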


\begin{proof}
We argue by induction on $\dim M(v)$.

Since $M(v)$ is finite-dimensional, Remark~\ref{lem:hwv}
 ensures the existence of a highest weight vector $w_1 \in M(v)$.
Then $M(w_1)$ is an irreducible submodule of $M(v)$.

If $M(w_1) = M(v)$, there is nothing to prove. Otherwise, consider the quotient
\[
\overline{M} := M(v)/M(w_1),
\]
which is finite-dimensional.

Applying Remark~\ref{lem:hwv} to $\overline{M}$, we obtain a highest weight
vector $\overline{w}_2 \in \overline{M}$. Let $w_2 \in M(v)$ be a preimage of
$\overline{w}_2$. Then $M(w_2)$ is an irreducible submodule of $M(v)$, and its image
in $\overline{M}$ coincides with the cyclic submodule generated by $\overline{w}_2$.

In particular,
\[
M(w_1) + M(w_2) \subseteq M(v).
\]

If $M(w_1) + M(w_2) = M(v)$, we are done. Otherwise, we repeat the argument
with the quotient by $M(w_1) + M(w_2)$.

Since the dimension strictly decreases at each step, the process terminates
after finitely many steps, yielding highest weight vectors
$w_1, \dots, w_k$ such that
\[
M(v) = \sum_{i=1}^k M(w_i).
\]
\end{proof}

\begin{proposition}\label{thm:poset_structure_correct}
Let $v \in F_S$, and consider the set
\[
[0, M(v)] := \{ M(w) \in \mathcal P_S \mid M(w) \subseteq M(v) \}.
\]
Then the minimal elements of $[0, M(v)]$ are precisely the irreducible
submodules of $M(v)$ of the form $M(w)$.
\end{proposition}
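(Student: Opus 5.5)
We prove the two inclusions separately, working inside the finite-dimensional $U_q(\mathfrak l_S)$-module $M(v)$. Every element of $[0,M(v)]$ is a finite-dimensional submodule of $M(v)$, so Remark~\ref{lem:hwv} applies to each of them. The key observation we will use repeatedly is that any nonzero $w$ in an irreducible submodule $N\subseteq M(v)$ satisfies $M(w)=N$.

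\emph{Irreducible implies minimal.} Suppose $M(w)\in[0,M(v)]$ is irreducible, and let $M(u)\in[0,M(v)]$ satisfy $M(u)\subseteq M(w)$. Since $u\in M(u)$, the module $M(u)$ is nonzero exactly when $u\neq 0$, and then irreducibility of $M(w)$ forces $M(u)=M(w)$. Two conventions for the degenerate case are possible. If the symbol $0$ in $[0,M(v)]$ denotes the zero module $M(0)=0$, we exclude it from the minimality comparison, so minimality means minimality among nonzero elements. Otherwise we simply restrict throughout to $v\neq 0$. I will state this convention explicitly at the start of the proof. With it in place, $M(w)$ is minimal. This is essentially the observation already made just before Proposition~\ref{prop:hw_decomposition_refined}, restricted to the interval.

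\emph{Minimal implies irreducible.} Let $M(w)$ be minimal in $[0,M(v)]$ with $w\neq 0$. By Remark~\ref{lem:hwv}, $M(w)$ contains a highest weight vector $u$. As in the proof of Proposition~\ref{prop:hw_decomposition_refined}, $M(u)$ is then an irreducible highest weight module, and the argument of the converse part of the preceding Proposition on $\Lambda_S$ shows it is isomorphic to $V(\lambda)$. Lemma~\ref{lem:inclusion} gives $M(u)\subseteq M(w)\subseteq M(v)$, so $M(u)\in[0,M(v)]$. Minimality of $M(w)$ then forces $M(u)=M(w)$, and hence $M(w)$ is irreducible.

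\emph{Main obstacle.} The delicate step is justifying that the cyclic module generated by a highest weight vector is irreducible. This uses complete reducibility of finite-dimensional type-1 $U_q(\mathfrak l_S)$-modules (for generic $q$): a finite-dimensional highest weight module is indecomposable, hence irreducible, and the weights occurring under $\operatorname{ad}_L$ are integral. I would cite the standard reference \cite{joseph2012quantum} for this, just as the earlier Proposition on $\Lambda_S$ implicitly does. The remaining difficulty is the convention for the bottom element $0$, handled as above.
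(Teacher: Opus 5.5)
Your proof is correct, and your first direction (irreducible $\Rightarrow$ minimal) is the paper's argument. For the converse you take a different route. You pick a highest weight vector $u\in M(w)$ via Remark~\ref{lem:hwv}, invoke complete reducibility of finite-dimensional weight modules of $U_q(\mathfrak l_S)$ at generic $q$ to get that $M(u)$ is irreducible, and then let minimality force $M(w)=M(u)$.

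The paper instead argues by contradiction and purely formally. If $M(w)$ were reducible it would have a proper nonzero submodule $N$. For any nonzero $u\in N$ one gets $0\neq M(u)\subseteq N\subsetneq M(w)$ with $M(u)\in[0,M(v)]$, contradicting minimality. That argument uses nothing about highest weights, the weight structure of $\operatorname{ad}_L$, or genericity of $q$; it works for cyclic submodules of any module over any algebra. So the ``main obstacle'' you flag is avoidable. Even in your positive formulation, a nonzero submodule of $M(w)$ of minimal dimension is simple, and it equals $M(u)$ for any nonzero $u$ in it.

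What your route buys is a concrete description: every minimal element is $M(u)$ for a highest weight vector $u$, which matches the generators in Proposition~\ref{prop:hw_decomposition_refined}. The price is depending on complete reducibility, which the statement itself does not need.

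On the degenerate case, you are right that, read literally, $M(0)=0$ lies in $[0,M(v)]$ and is its unique minimal element. The paper's proof passes over this silently; its first direction only establishes minimality among nonzero elements. Your first convention, minimality among nonzero members, is the correct fix. The alternative, ``restrict to $v\neq 0$'', does not help, since $M(0)=0\subseteq M(v)$ for every $v$.
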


\begin{proof}
First, let $M(w) \subseteq M(v)$ be an irreducible submodule.
Then $M(w)$ has no proper nonzero submodules. Hence if
$M(u) \subseteq M(w)$, then either $M(u)=0$ or $M(u)=M(w)$.
Thus $M(w)$ is minimal in $[0, M(v)]$.

Conversely, let $M(w)$ be a minimal element in $[0, M(v)]$.
Suppose that $M(w)$ is not irreducible. Then it contains a proper
nonzero submodule $M(u) \subsetneq M(w)$.
Since $M(u) \subseteq M(v)$, we have $M(u) \in [0, M(v)]$,
which contradicts the minimality of $M(w)$.
Thus $M(w)$ must be irreducible.
\end{proof}

\begin{corollary}
The poset $[0, M(v)]$ encodes the inclusion relations among cyclic
submodules of $M(v)$. As a consequence of the Proposition ~\ref{thm:poset_structure_correct}, every cyclic adjoint module $M(v)\subset U_q(\mathfrak{g})$, there exists an embedding of type $\lambda$, namely $M(w)$ (for some $\lambda\in \Lambda_S$) such that $M(w)\subseteq M(v)$.
\end{corollary}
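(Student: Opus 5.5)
The approach is to combine Remark~\ref{lem:hwv} with Proposition~\ref{thm:poset_structure_correct}: find a highest weight vector $w\in M(v)$ that generates an irreducible submodule, and check that $M(w)$ lies in $[0,M(v)]$, is minimal there, and is an embedding of type $\lambda$ for some $\lambda\in\Lambda_S$.

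\textbf{Step 1 (existence).} Let $v\in F_S$ with $v\neq 0$; if $v=0$ the statement is vacuous. Since $M(v)$ is finite-dimensional, Remark~\ref{lem:hwv} gives a highest weight vector $w\in M(v)$, meaning $\operatorname{ad}_L(E_i)(w)=0$ for all $i\in S$ and $w$ has some weight $\lambda$. By Lemma~\ref{lem:inclusion}, $M(w)\subseteq M(v)$. In particular $M(w)$ is finite-dimensional, so $w\in F_S$ and $M(w)\in[0,M(v)]$.

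\textbf{Step 2 (irreducibility and type).} By the characterization of $\Lambda_S$ proved above, $M(w)\simeq V(\lambda)$ with $\lambda\in\Lambda_S$. Concretely, $M(w)$ is a finite-dimensional highest weight module and hence irreducible. So $w\in F_S(\lambda)$, and by the proposition following the definition of embeddings, $M(w)$ is an embedding of type $\lambda$.

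\textbf{Step 3 (minimality).} Proposition~\ref{thm:poset_structure_correct} then says that $M(w)$ is a minimal element of $[0,M(v)]$. This matches the phrase ``as a consequence'' in the statement.

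\textbf{Main obstacle.} The delicate step is Step 2, the claim that a finite-dimensional module generated by a highest weight vector is irreducible. This needs complete reducibility of finite-dimensional type~1 $U_q(\mathfrak l_S)$-modules for generic $q$; the cited textbooks provide it, and the adjoint action of the $K_j$ gives type~1 weights. A subtler point is that $U_q(\mathfrak l_S)$ contains every $K_j$, including those for $j\notin S$, so the relevant weights are full weights and not only $S$-weights. I will accept the paper's convention that irreducibles are labeled by $V(\lambda)$ with $\lambda\in P^+$, as in the earlier Remark that $\Lambda_S\subseteq P^+$, and not re-examine dominance for the indices outside $S$.

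As an alternative that avoids highest weight theory, choose a nonzero cyclic submodule $M(w)\subseteq M(v)$ of minimal dimension. By Lemma~\ref{lem:inclusion} and minimality of dimension, every nonzero $u\in M(w)$ satisfies $M(u)=M(w)$, so $M(w)$ is irreducible. Taking $\lambda$ to be its isomorphism type gives the same conclusion.
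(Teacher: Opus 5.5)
Your proof is correct, but its main line is not the route the paper intends. The paper writes no separate proof. The phrase ``as a consequence of Proposition~\ref{thm:poset_structure_correct}'' points to the argument in your last paragraph: since $M(v)$ is finite-dimensional, $[0,M(v)]$ has a minimal nonzero element $M(w)$, and Proposition~\ref{thm:poset_structure_correct} identifies it as an irreducible submodule, i.e.\ an embedding of some type $\lambda$. Your Steps 1--2 instead repeat the opening move of the proof of Proposition~\ref{prop:hw_decomposition_refined}: a highest weight vector from Remark~\ref{lem:hwv} generates an irreducible $M(w)$. Step 3 then invokes Proposition~\ref{thm:poset_structure_correct} only after the conclusion has already been reached.

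The trade-off is the one you identify. The minimal-element route gets irreducibility for free from Lemma~\ref{lem:inclusion}. It uses Remark~\ref{lem:hwv} only to recognize the irreducible $M(w)$ as some $V(\lambda)$. Your highest-weight route needs the heavier fact that finite-dimensional highest weight $U_q(\mathfrak l_S)$-modules are irreducible (complete reducibility at generic $q$). In exchange it gives an explicit highest-weight generator whose weight is $\lambda$, which is what connects the result to the characterization of $\Lambda_S$. Your minimal-dimension argument is slightly more complete than the paper, which leaves the existence of a minimal nonzero element implicit.

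Three small points. First, for $v=0$ the conclusion is false, not vacuous, since no nonzero module lies in $M(0)=0$. The hypothesis $v\neq 0$ is needed, and the paper omits it too. Second, for the same reason $M(0)=0$ itself belongs to $[0,M(v)]$, so ``minimal'' in Proposition~\ref{thm:poset_structure_correct} and in your Step 3 must be read as ``minimal among nonzero elements''. Third, in Step 2 the characterization of $\Lambda_S$ is stated only for $\lambda\in P^+$. You have not verified that, and it fails for the paper's own example $-\alpha_x$. It is cleaner to get $\lambda\in\Lambda_S$ directly from $w\in F_S(\lambda)$ and the definition of $\Lambda_S$ as a subset of $\mathfrak h^*$.
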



\bibliographystyle{plain}
\bibliography{name3}

\end{document}